\documentclass{amsart}

\usepackage{amssymb,amsfonts,amsxtra,amsmath,tikz-cd,xcolor,mathrsfs,verbatim,centernot,enumitem,mathtools}
\usepackage{quiver}
\usepackage{relsize}    
\usepackage[all]{xy}
\usepackage{dsfont}
\usetikzlibrary{decorations.markings}
\tikzset{negated/.style={
        decoration={markings,
            mark= at position 0.5 with {
                \node[transform shape] (tempnode) {$\backslash$};
            }
        },
        postaction={decorate}
    }
} 
\def\@firstoftwo@second#1#2{%
  \def\temp##1.##2\@nil{##2}%
   \temp#1\@nil}
\newcommand\sref[1]{%
   \expandafter\@setref\csname r@#1\endcsname\@firstoftwo@second{#1}%
}

\newcommand{\ind}{\mathds{1}} 

\newtheorem{theo}{Theorem}[section]
\newtheorem*{theo*}{Theorem} % Unnumbered version

\newtheorem{conj}[theo]{Conjecture}
\newtheorem*{conj*}{Conjecture} % Unnumbered
\newtheorem{coro}[theo]{Corollary}

\newtheorem{conv}[theo]{Convention}
\newtheorem{prop}[theo]{Proposition}

\theoremstyle{definition} 

\newtheorem{exam}[theo]{Example}
\newtheorem*{exam*}{Example} % Unnumbered
\newtheorem{defi}[theo]{Definition}

\theoremstyle{remark}
\newtheorem{rema}[theo]{Remark}

\numberwithin{equation}{section}

\newcommand{\CC}{\mathbb{C}}

\newcommand{\ZZ}{\mathbb{Z}}

\newcommand{\cI}{\mathcal{I}}
\newcommand{\cO}{\mathcal{O}}

\newcommand{\cL}{\mathcal{L}}

\newcommand{\cT}{\mathcal{T}}

\newcommand{\cJ}{\mathcal{J}}

\def\and{\quad\mathrm{and}\quad}

\def\Eu{\textup{Eu}}
\def\Der{{\textup{Der}}}

\def\GSV{{\textup{GSV}}}

\def\Ind{{\textup{Ind}}}
\def\Sing{{\textup{Sing}}}

\def\beq{\begin{equation}}
\def\eeq{\end{equation}}

\begin{document}
\title[A Criteria of   Weighted Homogeneity via  Logarithmic Vector Fields]{A Criteria of Weighted Homogeneity via Logarithmic Vector Fields}

%    Information for first author
%\author{Xia Liao}
%    Address of record for the research reported here
%\address{Department of Mathematical Sciences, 
%Huaqiao University, 
%Chenghua North Road 269, 
%Quanzhou, Fujian, China}
%    Current address
%\curraddr{Department of Mathematics and Statistics,
%Case Western Reserve University, Cleveland, Ohio 43403}
%\email{xliao@hqu.edu.cn}
%    \thanks will become a 1st page footnote.
%\thanks{The first author is supported by Chinese National Science Foundation of China (Grant No.11901214).}

%    Information for second author

\author{Jihao Liu}
\address{Department of Mathematics, Peking University, No. 5 Yiheyuan Road, Haidian District, Beijing 100871, China}
\address{Beijing International Center for Mathematical Research, Peking University, No. 5 Yiheyuan Road, Haidian District, Beijing 100871, China}
\email{liujihao@math.pku.edu.cn}

\author{Xiping Zhang}
\address{School of Mathematical Sciences, Key Laboratory of Intelligent Computing and Applications (Ministry of Education),
Tongji University, 
1239 Siping Road, 
Shanghai, China}
\email{xzhmath@gmail.com}
%\thanks{The   author  is supported by National Natural Science Foundation of China (Grant No.12201463). }

\subjclass[2020]{32S05, 14C17,14B05}
\keywords{logarithmic vector field, local Euler obstruction, microlocal intersection, weighted homogeneity}
%    General info
%\subjclass[2020]{14B05, 14C17}

%\date{January 1, 2001 and, in revised form, June 22, 2001.}

%\dedicatory{This paper is dedicated to our advisors.}

%\keywords{ }

\begin{abstract} 
Recently in \cite{Machado-Seade26} the authors proposed a conjecture that the homogeneity of an isolated hypersurface germ can be detected by the existence of non-degenerate  holomorphic logarithmic vector fields. In this paper we prove this conjecture affirmatively.
\end{abstract}
\maketitle

\section{Introduction} 
The study of 
Singular varieties  have gained significant importance in modern algebraic geometry. 
One of the most fundamental prototypes of  singularities is the isolated weighted homogeneous hypersurface singularities. 
Such singularities are relatively simple, yet they contain rich geometric information, and they have been studied extensively across various branches of mathematics.

Let $(W,0)\subset\CC^{n+1}$ be an open domain and let $D\subset W$ be the germ of a reduced hypersurface with an isolated singularity at $0$ cut by a holomorphic function germ  $f\in \cO_{W,0}$.  
Recall that  $(D,0)$ is called   weighted homogeneous  if,  in some local analytic coordinates the defining equation of $D$ is weighted homogeneous.      
By Saito’s results in  \cite{KSaito71}, 
this is equivalent to  $f$ being quasi-homogeneous, i.e., the Jacobian ideal $\cJ_f=(f_{x_0}, f_{x_1}, \cdots , f_{x_n})$ contains $f$. When $D$ has an isolated singularity at $0$, this is equivalent  to $\tau=\mu$, where $\tau:=\dim_\CC \frac{\cO_{W,0}}{(f)+\cJ_f}$ and 
$\mu:=\dim_\CC \frac{\cO_{W,0}}{\cJ_f}$ denote the Tjurina number and the Milnor number of $D$ at $0$.

Recently in \cite{Machado-Seade26} da Silva Machado and Seade discussed the lower bound of $\GSV$ indices of holomorphic vector fields that are tangent to $D$ and proposed the following conjecture. 
\begin{conj}
\label{c1}
Let $D$ be the isolated hypersurface singularity as above.
The following statements are equivalent.
\begin{enumerate}
\item[$(i)$] $(D,0)$ is  weighted homogeneous.
\item[$(ii)$] There exists a holomorphic vector field tangent to $D$ with an isolated singularity at $0$ and 
is transverse to  the link spheres of $D$. 
\item[$(iii)$] There exists a holomorphic vector field 	tangent to $D$ with a non-degenerate isolated singularity at $0$.
\end{enumerate}
\end{conj} 
The implication $(i)\implies (ii)$ and $(i)\implies (iii)$ are obvious, where one just takes the weighted Euler vector field.  
In  \cite{Machado-Seade26} the authors proved that
$(ii)\implies (i)$ when $n$ is odd and $(iii) \implies (i)$ when $n$ is even using  \cite[Theorem 1]{Machado-Seade26}. 
In this paper we adapt a different geometric method and prove this conjecture completely.
\begin{theo}
\label{t1} 
Conjecture~\ref{c1} is true.
\end{theo}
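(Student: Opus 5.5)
The plan is to prove $(ii)\implies(i)$ and $(iii)\implies(i)$; the converse implications were already settled by the weighted Euler field. Both hard implications will be reduced to a single algebraic dichotomy. Let $v$ be a holomorphic vector field tangent to $D$ with an isolated zero at $0$. Since $D$ is reduced, $v$ is logarithmic, so $v(f)=hf$ for some $h\in\cO_{W,0}$. If $h(0)\neq 0$, then
\[
f=h^{-1}\sum_i v_i f_{x_i}\in\cJ_f ,
\]
so $f$ is quasi-homogeneous and $(D,0)$ is weighted homogeneous by Saito's theorem \cite{KSaito71}. It therefore suffices to show that $h(0)=0$ is incompatible with hypothesis $(ii)$ or $(iii)$, unless $D$ is already weighted homogeneous (for instance an $A_1$ singularity).

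\textbf{Step 1 (flattening $v$ onto the Milnor fibres).} Assume $h(0)=0$ and set
\[
\tilde v \;=\; v-\frac{h f}{|df|^2}\,\overline{\nabla f},
\]
which is a smooth (non-holomorphic) field on $B_\epsilon\setminus D$. By construction $\tilde v(f)=0$, so $\tilde v$ is tangent to every Milnor fibre $F_t=f^{-1}(t)\cap B_\epsilon$. To control the correction term I will use the integral-closure inequality $|f|\le C|x|\,|df|$ near $0$, which holds because $f\in\overline{\mathfrak m\cJ_f}$ (Teissier / Briançon--Skoda). This gives
\[
\Bigl|\frac{hf}{|df|^2}\overline{\nabla f}\Bigr|\le C|h(x)|\,|x| .
\]
Since $h(0)=0$, the right-hand side is $o(|x|)$.

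\textbf{Step 2 (the vector field side).} Under $(iii)$ the linear part of $v$ is invertible, so $|v(x)|\ge c|x|$. Under $(ii)$, a Łojasiewicz inequality gives $|v(x)|\ge c|x|^{N}$. In case $(iii)$ this immediately makes $\tilde v$ nowhere zero on $F_t$ for $0<|t|\ll\epsilon$.

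In case $(ii)$ the estimate is weaker, and I would instead compare $v$ and $\tilde v$ on the link. Transversality of $v$ to the link spheres of $D$ persists to $\partial F_t$ for small $t$, so $\tilde v$ points outward along $\partial F_t$. The Poincaré--Hopf theorem with boundary then yields
\[
\chi(F_t)=1+(-1)^n\mu=\#\{\text{zeros of }\tilde v\text{ on }F_t\}.
\]
In the same setting this quantity equals $\GSV(v,0)$.

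\textbf{Step 3 (contradiction).} The zero count is obtained by continuity of the GSV index: $\GSV(v,0)$ is computed from $v$ on the link and equals the Poincaré--Hopf sum of any extension of $v|_{\partial F_t}$ tangent to $F_t$. If $\tilde v$ has no zeros on $F_t$, then $1+(-1)^n\mu=0$, which forces $n$ odd and $\mu=1$, i.e.\ an $A_1$ singularity; this is weighted homogeneous. In case $(ii)$, I would use transversality together with the microlocal interpretation of the GSV index to identify $\GSV(v,0)$ with an intersection of the conormal/characteristic cycle $[T^*_{D_{reg}}W]+m[T^*_0W]$ against the section determined by $v$. The constraint $h(0)=0$ puts this intersection entirely on the zero-section part, which again forces $\chi(F_t)$ to vanish or contradicts the sign dictated by transversality.

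The main obstacle is Step 2 in case $(ii)$. There $|v|$ may vanish to higher order than the correction term, so the naive comparison fails. The approach I would try first is to run the comparison only near the boundary sphere, where transversality gives a uniform lower bound, and to handle the interior topologically through the GSV/microlocal intersection identity rather than by pointwise estimates.
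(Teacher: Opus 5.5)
Your dichotomy on $h(0)$ and the estimate in Step~1 are correct, and in case (iii) they do give something real. If $h(0)=0$, then $\tilde v\neq 0$ on $B_\epsilon\setminus\{0\}$, so $(\tilde v,\overline{\nabla f})$ is a $2$-frame on all of $S_\epsilon$ extending $(v,\overline{\nabla f})$ on the link, and hence $\GSV(v,D,0)=0$. The gap is the next step. To conclude $1+(-1)^n\mu=0$ you need $\GSV(v,D,0)=\chi(F_t)=1+(-1)^n\mu$. Your only justification for this is Poincar\'e--Hopf with boundary, which uses transversality of $v$ to $\partial F_t$; that is hypothesis (ii), not (iii). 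A non-degenerate logarithmic field need not be transverse: $i(x\partial_x+y\partial_y)$ along $\{xy=0\}$ has real part tangent to every sphere. For non-degenerate fields this identity is exactly the substantive content of the paper's proof. It comes from the microlocal formula $\GSV(v,D,0)=\Eu^{v}_D(0)-\Eu_D(0)+1+(-1)^n\mu$ together with $\Eu^{v}_D(0)=\Eu_D(0)$ (Proposition~\ref{prop; nondegenerate}), which holds because the Gonz\'alez-Sprinberg formula only sees the ideal $\cI_v=\mathfrak m_{W,0}$. Alternatively, you could close your gap with Proposition~\ref{prop; algebraicGSV}. When $\cI_v=\mathfrak m_{W,0}$ and $h(0)=0$, it gives $\GSV=1-\tau$ for $n$ odd and $\GSV=\tau\geq 1$ for $n$ even. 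So $\GSV=0$ forces $n$ odd and $\tau=1$, i.e.\ an $A_1$ point. That route would avoid Euler obstructions entirely, but some independent evaluation of the index is indispensable, and your proposal does not supply one.

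Case (ii) is, as you acknowledge, not proved. Nothing excludes $h(0)=0$ there. The proposed microlocal fix names no mechanism by which $h(0)=0$ would localize $[T^*_DW]\cdot[\tilde\sigma(W)]$ or conflict with the sign of $dr(\mathrm{Re}\,v)$. What you do establish, $\GSV(v,D,0)=\chi(F_t)$, settles $n$ odd via Proposition~\ref{prop; algebraicGSV}: it gives $\dim_\CC\cO_{W,0}/(f,v_0,\dots,v_n)=1+\tau-\mu$, which forces $\mu=\tau$. For $n$ even, however, the term $\dim_\CC\cO_{W,0}/(v_0,\dots,v_n)-\dim_\CC\cO_{W,0}/(h,v_0,\dots,v_n)$ is uncontrolled unless $v$ is non-degenerate. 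The paper avoids this comparison altogether. After replacing $v$ by $-v$ so that it points inward on the link spheres, the holomorphic flow of $v$ preserves $D$ and yields a contracting automorphism of the germ $(D,0)$; Morvan's Theorem~A then gives weighted homogeneity. Morvan's argument even shows the linear part of $v$ has eigenvalues with negative real part, so (ii) reduces to (iii). Some dynamical input of this kind is what your case (ii) is missing.
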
 
The proof is summarized as follows.  For the implication $(iii)\implies (i)$ we adapt the microlocal intersection formula for the $\GSV$ index given in \cite{LZ25-2}, together with the fact that the local Euler obstruction of a
non-degenerate holomorphic vector field agrees with the classical local Euler
obstruction (see Theorem~\ref{theo; non-degenerate}).  For the implication $(ii)\implies (i)$, notice that the transverse holomorphic vector field produces a holomorphic  contracting flow, integrating along which we obtain
  a contracting automorphism of the germ $(D,0)$. It then follows from \cite[Theorem A]{Morvan24} that $(D,0)$ is weighted
homogeneous (see Corollary~\ref{coro; conjsolved}). In fact, by Remark~\ref{rema} a transverse  vector field is necessarily non-degenerate, thus we may also adapt implication $(iii)\implies (i)$ to conclude  $(ii)\implies (i)$.

\begin{rema}  
We  also note that, parallel to this paper, in \cite{LiuZ26} a different proof of Conjecture~\ref{c1}  is given   by
  the Danus system, a specialized agent built on Rethlas and substantially more capable for fundamental mathematical research based on the Rethlas system (see \cite{Ju+26} for an introduction to the Rethlas system).  While \cite{LiuZ26} was generated entirely by AI, the present paper was completely handcraft. It is interesting to compare the two papers as a comparison of the potential of AI-assisted mathematical research.
\end{rema}

\noindent
{\bf Acknowledgements}: 
The  authors would like to thank Xia Liao for carefully reading the first draft and providing valuable feedbacks.
The  authors  thank  Yuanpu Xiong for helpful  discussions.  The first author was partially supported by the National Key R\&D Program of China \#\allowbreak 2024YFA1014400. 
The second  author was supported by National Natural Science Foundation of China (Grant No.12201463).  

\section{Preliminary}
\subsection{Vector Fields} 
Let $(W,0)\subset\CC^{n+1}$ be an open domain and let $D\subset W$ be the germ of a reduced hypersurface  cut by a holomorphic function germ  $f\in \cO_{W,0}$.     
Let $\Der_{W,0}$ be the space of germs of holomorphic vector fields on $W$. 
We say a holomorphic vector field $\tilde{\nu}=\sum_{i=0}^n \tilde{\nu}_i \partial_{x_i}\in \Der_{W,0}$  is tangent to $D$ if the restriction $\tilde{\nu}|_{D_{sm}}$ is tangent to the smooth locus $D_{sm}$, i.e.,  $\tilde{\nu}|_{D_{sm}}\subset   TD_{sm}$. This is equivalent to saying that
\[
\tilde{\nu}\in \Der_{W,0}(-\log D):=
\left\lbrace
\chi=\sum_{i=0}^n \chi_i\partial_{x_i}\in \Der_{W,0}
\  \Big\vert \  \chi(f)=\sum_{i=0}^n \chi_i\cdot f_{x_i} \in (f) 
\right\rbrace \/.
\] 
Such holomorphic vector fields  will be called logarithmic holomorphic vector fields (along $D$).

\begin{conv}
\label{conv}
In this paper  we always assume that $D$ has an isolated singularity at $0$.  We will denote by $\mu$ and $\tau$   the   Milnor number  and the  Tjurina number of $D$ at $0$ respectively.

We also assume that  $\tilde{\nu}=\sum_{i=0}^n \tilde{\nu}_i \partial_{x_i}\in \Der_{W,0}(-\log D)$ is a logarithmic holomorphic vector field  along $D$ and that $\tilde{\nu}$ has an isolated singularity at $0$, i.e.,
$
\Sing(\tilde{\nu})
:= \operatorname{Supp} V(\tilde{\nu}_0,\tilde{\nu}_1,\ldots,\tilde{\nu}_n)
= \{0\}.
$

We equip $W$ with the standard Hermitian metric, and hence with the standard Euclidean metric inherited from $\mathbb{C}^{n+1}$. With respect to this metric, we denote by $B_\epsilon(0)$ and $S_\epsilon$ the $(2n+2)$-dimensional ball and the $(2n+1)$-dimensional sphere centered at $0$ with radius $\epsilon$, respectively.
\end{conv}

\begin{defi}
\label{defi; transverse} 
We say that a logarithmic holomorphic vector field $\tilde{\nu}$ is transverse to the
link spheres of $D$ if there exists $\epsilon_0>0$ such that, for every
$0<\epsilon<\epsilon_0$ and every point $P\in D\cap S_\epsilon$, one has $\text{Re}(\tilde{\nu}(P))\notin T_PS_\epsilon $. 
This is equivalent to saying that 
$dr(\text{Re}(\tilde{\nu}))(P)    \neq 0$ for any point $P\in D\cap S_\epsilon$, where  $r=\|x\|^2$ denotes the real   distance square function.
\end{defi}

\begin{defi}
\label{defi; non-degenerate}
We say that a logarithmic holomorphic vector field  $\tilde{\nu}$ has a non-degenerate isolated singularity at $0$  if $\Sing(\tilde{\nu})=\{0\}$  and the linear part of $\tilde{\nu}$ is non-degenerate. This is equivalent to saying that the Jacobian ideal
$\cI_{\tilde{\nu}}:=(\tilde{\nu}_0, \tilde{\nu}_1, \cdots , \tilde{\nu}_n)$  coincides with the maximal ideal $\mathfrak{m}_{W,0}$ in $\cO_{W,0}$. 
\end{defi}

\begin{exam}
\label{counterexam}
Set $f=x^2+y^2+z^2\in \mathbb C\{x,y,z\}$ and  $D=\{f=0\}\subset \CC^3$. We consider the Euler vector field  $E=x\partial_x+y\partial_y+z\partial_z$ and the vector field $\tilde{\nu}
:=y\partial_x-x\partial_y+f\partial_z$. Then we have $E(f)=2f$ and $\tilde{\nu}(f)=2zf$.
Both vector fields are then tangent to $D$ and have isolated singularities at $0$. 

The Euler vector field $E$ is    transverse to the
link spheres of $D$ and has a non-degenerate singularity at $0$. The vector field  $\tilde{\nu}$ however, does not have a non-degenerate singularity at $0$ since its linear part is $y\partial_x-x\partial_y$, which is degenerate. Also by computation we have $dr(\text{Re}(\tilde{\nu}))(P)   \equiv 0$ for any point $P\in D\cap S_\epsilon$, thus this vector field is everywhere tangent to the link spheres of $D$ instead of being transverse to the link spheres.

On the other hand,  computing the  $\GSV$ index  (see Proposition~\ref{prop; algebraicGSV}) we obtain   
\[
\GSV(\tilde{\nu}, D, 0)=
\dim_\CC \frac{\cO_{W,0}}{(y, -x, f)} -\dim_\CC \frac{\cO_{W,0}}{(y, -x, f, 2z)}+\tau
=2-1+\tau =1+\tau \/.
\]  
This is  the lower bound as in \cite[Theorem 1]{Machado-Seade26}. 
However it seems that $\nu:=\tilde{\nu}|_D=y\partial_x-x\partial_y$ does not admit any non-degenerate holomorphic extensions, since any extension $\chi$ of $\nu$ differs $\tilde{\nu}$ by a vector field $\eta\in (f)\Der_{W,0}$, and therefore cannot have non-degenerate linear part.  
\end{exam}

\subsection{Characteristic Cycles and Microlocal Intersection Formula}
Given the    open domain $W\subset \CC^{n+1}$, for any irreducible closed  subvariety $X\subset W$ we consider its conormal space
\[
T_X^*W:=\overline{
\left\lbrace (x, \xi)\vert x\in X_{sm} \text{ and } \xi(T_xX) =0 
\right\rbrace } \subset T^*W \/.
\]
This is an irreducible conical Lagrangian subvariety of $T^*W$. In fact one can prove that any irreducible conical Lagrangian subvariety in $T^*W$ is a conormal space. We denote by $\cL(W)$  the free abelian group generated by irreducible  conical Lagarangian subvarieties. 

By a constructible function on $W$ we mean an integer valued function $\gamma\colon W\to \mathbb{Z}$ such that $\gamma^{-1}(i)$ is a constructible subset of $W$. The constructible functions on $W$ form an abelian group denoted by  $CF(W)$.  
To generalize the Poincar\'e-Hopf theorem to singular varieties, in \cite{MAC} MacPherson introduced the local Euler obstruction functions $\{\textup{Eu}_Z\}$ for  closed subvarieties $X$ of $W$. The local Euler obstruction functions  form 
 a  $\ZZ$-linear  basis for $CF(W)$.   More details about the local Euler obstructions will be given in \S\ref{sec; Euobs}.
 
\begin{defi}
\label{defi; CC}
The characteristic cycle map is the  group homomorphism
\[
\textup{CC}\colon CF(W) \to \cL(W) \/; \quad \Eu^\vee_X:=(-1)^{\dim X}\Eu_X  \mapsto   T_X^*W \/.
\]
For any constructible function $\gamma$, its image $\textup{CC}(\gamma)$ is called the characteristic cycle  of $\gamma$. 
\end{defi}

\subsection{Local Euler Obstructions of Holomorphic Vector Fields}
\label{sec; Euobs}
Let $W$ be an open domain of $\CC^{n+1}$ and let $X$ be a purely $d$-dimensional analytic subvariety of $W$.  
By the Nash blowup of $X$ (in $W$) we mean the following analytic variety
\[
Z:= \overline{ \left\lbrace
(x, T_xX) \vert x\in X_{sm}   \text{ and  } T_xX  \text{ is the tangent space} 
\right\rbrace} \subset \textup{Gr}_d(TW) \cong W\times \textup{Gr}(d,\CC^{n+1}) \/.
\] 
The first projection $\pi\colon Z\to X$ is a proper and  birational map. 
The universal subbundle of $\textup{Gr}_d(TW)$ restricts to a rank $d$ vector bundle $\cT$ on $Z$ and we call it the Nash tangent bundle. 

We assume that $X$ has an isolated singularity at $0$. 
Let $\tilde{\nu}=\sum_{i=0}^n \tilde{\nu}_i \partial_{x_i}$ be a holomorphic vector field on $W$ that is tangent to  $X_{sm}$ and we assume $\Sing(\tilde{\nu})=\{0\}$.
Then the restricted vector field $\nu:=\tilde{\nu}|_X$
naturally lifts to a holomorphic section $\chi$ of the Nash tangent bundle $\cT$ that  only vanishes  at $Z_0:=\pi^{-1}(0)$ in a neighborhood $V_\epsilon:=\pi^{-1}(X\cap B_\epsilon(0))$
for some $\epsilon > 0$ small enough. 

The  local Euler obstruction of $\tilde{\nu}$ at $0$, denoted by $\textup{Eu}_X^{\tilde{\nu}}(0)$, is then
 defined to be the obstruction to extending $\chi$ as a nowhere zero section of $\cT$ from $\partial V_\epsilon:=\pi^{-1}(X\cap S_\epsilon)$ to $V_\epsilon$.  
When $\tilde{\nu}$ is a radial vector field this coincides with the classical local Euler obstruction defined in \cite{MAC}.
For more details about obstruction theory and local Euler obstructions we refer to \cite{MR1835697}\cite{MR2759085}\cite{MR4476666}\cite{MR440554}.

By \cite[Proposition 3.2]{LZ25-2} we have  the following Gonz\'{a}lez-Sprinberg   formula.
\begin{equation} 
\label{eq; EulerObs} 
\textup{Eu}_{X}^{\tilde{\nu}}(0)=\int_{Z_0}  [Z]\cdot [\chi(Z)]=\int_{Z_0} c(\cT)\cap s(\tilde{Z}_0, Z)  \/.
\end{equation}
Here $\tilde{Z}_0$ is the intersection scheme of $\chi(Z)$ with the zero section of $\cT$ and $\int_{Z_0}: H_0(Z_0) \to \mathbb{Z}$ is the pushdown morphism induced by $Z_0\to pt$.
 
On the conormal side, the Hermitian metric  on $W$ induces an $\mathbb{R}$-linear $C^\infty$ isomorphism $T\CC^n \cong T^*\CC^n$. Under this isomorphism $\tilde{\nu}$ corresponds to a continuous section $\tilde{\sigma}\colon W\to T^*W$ that only intersects $T_X^*W$ at $(0,0)$. By \cite[Theorem 3.4]{LZ25-2} we have a microlocal intersection formula.
\begin{equation}
\label{eq; microlocal for Euobs}  
\textup{Eu}_{X}^{\tilde{\nu}}(0)=(-1)^{d}  \sharp_0 \left( [T_X^*W]\cdot [\tilde{\sigma}(W)]  \right) 
\end{equation} 
where $\sharp_0$ means the intersection number at $(0,0) \in T^*W$.
We then have 
\begin{prop}
\label{prop; nondegenerate}
Assume that $X$ has an isolated singularity at $0$.
If $\tilde{\nu}$ is tangent to  $X_{sm}$ and   has a non-degenerate isolated singularity at $0$, we have 
$
\Eu^{\tilde{\nu}}_X(0)=\Eu_X(0) \/.
$
\end{prop}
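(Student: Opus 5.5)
Deform $\tilde{\nu}$ to a radial vector field through vector fields whose associated sections $\tilde{\sigma}_t$ meet $T_X^*W$ only at $(0,0)$ over a fixed small ball. Then use the microlocal intersection formula \eqref{eq; microlocal for Euobs}: the local intersection number $\sharp_0([T_X^*W]\cdot[\tilde\sigma_t(W)])$ is invariant under such a deformation, since no intersection point escapes through the boundary. At the radial end the number is $(-1)^d\Eu_X(0)$, because for a radial field $\Eu^{\tilde\nu}_X(0)$ is the classical obstruction.

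First I linearize. Write $\tilde\nu=A x+h(x)$ with $A$ invertible and $h\in\mathfrak m^2\Der_{W,0}$, and set $\tilde\nu_s(x)=s^{-1}\tilde\nu(sx)$ for $s\in(0,1]$. Then $\tilde\nu_s\to Ax$ as $s\to 0$. The key point is tangency. $\tilde\nu(x)\in T_x X$ for $x\in X_{sm}$ means $\tilde\nu_s(x)\in T_{x}(s^{-1}X)$. As $s\to0$ the limit $Ax$ is tangent to the tangent cone $C_0X$, but not to $X$ itself, so the family cannot stay logarithmic for $X$. Instead I avoid rescaling and work with the conormal intersection directly on $S_\epsilon$.

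The core step is as follows. For $(x,\xi)\in T_X^*W$ with $x\in X\cap B_\epsilon\setminus 0$, the covector $\xi$ annihilates $T_xX$, and $\tilde\nu(x)\in T_xX$. Hence $\langle\xi,\tilde\nu(x)\rangle=0$, so $\tilde\sigma(x)$ is Hermitian-orthogonal to $\xi$. Consider the straight-line homotopy $\sigma_t=(1-t)\tilde\sigma+t\,\bar\partial$-dual of the radial field $x$, i.e.\ the covector $\langle\cdot,x\rangle$. An intersection $\sigma_t(x)=\xi\in (T_xX)^\perp$ would force $(1-t)\tilde\nu(x)+tx$ to lie in $(T_xX)^{\perp}$ up to the metric identification. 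Pairing with $\tilde\nu(x)\in T_xX$ gives $(1-t)|\tilde\nu(x)|^2+t\langle x,\tilde\nu(x)\rangle=0$. Non-degeneracy gives $|\tilde\nu(x)|\ge c|x|$. So to exclude intersections it suffices that $\mathrm{Re}\langle x,\tilde\nu(x)\rangle\ge0$ on $X$ near $0$ (or of bounded negative ratio). This fails for general $A$, e.g.\ rotations. I therefore homotope in two stages. First, deform within vector fields tangent to $X$ with non-degenerate linear part: $\tilde\nu\mapsto e^{i\theta}\tilde\nu$ keeps tangency, non-degeneracy and the singular set, and a complex rotation does not change the intersection number. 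Second, on the Nash blowup side, note that by \eqref{eq; EulerObs} $\Eu^{\tilde\nu}_X(0)=\int c(\cT)\cap s(\tilde Z_0,Z)$, where $\tilde Z_0$ is the zero scheme of the lifted section $\chi$.

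The main obstacle is comparing $\tilde Z_0$ for $\tilde\nu$ and for the radial field. My plan is to show the two sections generate the same ideal-theoretic data. Non-degeneracy means $(\tilde\nu_0,\dots,\tilde\nu_n)=\mathfrak m=(x_0,\dots,x_n)$, so $\tilde\nu=Ax+h$ and $x=A^{-1}(\tilde\nu-h)$ with $h\in\mathfrak m\cdot(\tilde\nu_i)$. Pulled back to $Z$, the zero scheme of $\chi$ is cut out by the ideal generated by the components of $\tilde\nu\circ\pi$, namely $\pi^{-1}\cI_{\tilde\nu}\cdot\cO_Z=\pi^{-1}\mathfrak m\cdot\cO_Z$. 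The same holds for a radial-type section, in the Gonz\'alez-Sprinberg picture where the Euler obstruction is $\int c(\cT)\cap s(\pi^{-1}(0),Z)$. Since Segre classes depend only on the subscheme, the two integrals agree and the proposition follows. The delicate point, which I expect to need the most care, is justifying that the zero scheme of $\chi$ in $\cT$ coincides with the scheme defined by $\pi^*\cI_{\tilde\nu}$. This holds because $\cT\subset \pi^*TW$ is a subbundle, so $\chi$ vanishes exactly where the ambient section $\pi^*\tilde\nu$ does, scheme-theoretically: the zero scheme of a section of a subbundle equals that of its image in the ambient bundle.
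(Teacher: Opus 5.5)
Your final argument is correct and is essentially the paper's proof. The zero scheme of $\chi$ is cut out by $\pi^{-1}\cI_{\tilde\nu}\cdot\cO_Z=\pi^{-1}\mathfrak{m}\cdot\cO_Z$, i.e.\ it is the scheme-theoretic fibre $\pi^{-1}(0)$, so \eqref{eq; EulerObs} and the classical Gonz\'alez-Sprinberg formula $\Eu_X(0)=\int c(\cT)\cap s(\pi^{-1}(0),Z)$ give the same Segre-class integral. Your subbundle argument makes explicit the scheme-theoretic identification the paper leaves implicit, and appealing to the classical formula directly avoids the paper's detour through $\Eu^E_X$ for an Euler field that need not be tangent to $X$. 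The rescaling, straight-line homotopy and $e^{i\theta}$-rotation attempts in the first half are abandoned and play no role, so you should delete them.
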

\begin{proof}
Let $E:=\sum_{i=0}^n   x_i\partial_{x_i}$ be the Euler vector field, then we have $\Eu^{E}_X(0)=\Eu_X(0)$ since $E$ is a radial vector field. 
Let $\cI_{\tilde{\nu}}:=(\tilde{\nu}_0,\tilde{\nu}_1, \cdots ,\tilde{\nu}_n )$ and  $\cI_E:= (x_0, x_1, \cdots ,x_n)=\mathfrak{m}_{W,0}$ be the Jacobian ideals of $\tilde{\nu}$ and $E$ in $\cO_{W,0}$ respectively, then we have
$\cI_{\tilde{\nu}}= \cI_E$ by the non-degenerate assumption.

Since the subscheme $Z_0$ of $Z$ is  cut by $\pi^*\mathfrak{m}_{X,0} = \pi^* (\cI_E\cO_{X,0})=\pi^* (\cI_{\tilde{\nu}}\cO_{X,0})$,
by the Gonz\'{a}lez-Sprinberg  formula we   have 
$
 \Eu_X(0)=\Eu^{E}_X(0)=\int_{Z_0} c(\cT)\cap s(Z_0, Z)=\Eu^{\tilde{\nu}}_X(0) \/.
$
\end{proof}

\subsection{Indices of Vector Fields}
Let $\tilde{\nu}$ be a holomorphic vector field on $W$ that is tangent to $D$. The $\GSV$ index $\GSV(\tilde{\nu}, D,0)$ was introduced  by X. Gomez-Mont, J. Seade and A. Verjovsky  in \cite{GSV} as a generalization of the Poincar\'e-Hopf index. The original definition concerns the obstruction to extending  $\tilde{\nu}|_D$ and the gradient $\overline{\nabla}f$ to a local $2$-frame and we refer to \cite{GSV} for the precise definition. Later, the construction was extended to stratified vector fields on any complex hypersurface. The   readers  are referred to \cite[\S 3.5.2]{BSS87} for more details.

For simplicity we omit the definition of the  $\GSV$ index in this paper. We will mostly rely on the following microlocal intersection formula proved in \cite[Corollary 5.4]{LZ25-2}. 
\begin{prop} 
\label{prop; microGSV}
Let $D$ and $\tilde{\nu}$ be as in Convention~\ref{conv}, then  we have
\begin{equation}
\label{eq; microGSV}
\GSV(\tilde{\nu}, D, 0)  = \Eu^{\tilde{\nu}}_D(x)   -\Eu_D(x)+ 1  +(-1)^{n}\mu.
\end{equation}
In particular, if $\tilde{\nu}$ has a non-degenerate isolated singularity at $0$, by Proposition~\ref{prop; nondegenerate} we have 
\[
\GSV(\tilde{\nu}, D, 0)  =  1  +(-1)^{n}\mu.
\]
\end{prop}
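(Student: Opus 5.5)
The statement to prove is Proposition~\ref{prop; microGSV}. Formula \eqref{eq; microGSV} is quoted from \cite[Corollary 5.4]{LZ25-2}, so the plan is to take it as given and only derive the ``in particular'' clause. If one wanted a self-contained account of \eqref{eq; microGSV}, the route would be as follows. Write $\GSV$ through the algebraic formula of Proposition~\ref{prop; algebraicGSV}, or equivalently as the intersection of the characteristic cycle of the constant function on the Milnor fibre with the section $\tilde{\sigma}$. Specialise the Milnor fibre, which gives the cycle $\textup{CC}(\ind_{D})$ plus $\mu$ times the conormal fibre $T^*_0W$, with the appropriate sign $(-1)^n$. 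Then apply \eqref{eq; microlocal for Euobs} to $T^*_DW$ and to $T^*_{\{0\}}W$.

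For the clause itself, I would proceed as follows, with the symbol $x$ in \eqref{eq; microGSV} read as the point $0$. First I check that the hypotheses of Proposition~\ref{prop; nondegenerate} hold with $X=D$:
\begin{itemize}
\item $D$ has an isolated singularity at $0$ by Convention~\ref{conv};
\item $\tilde{\nu}\in\Der_{W,0}(-\log D)$ is tangent to $D_{sm}$;
\item $\tilde{\nu}$ has a non-degenerate isolated singularity at $0$, so $\cI_{\tilde{\nu}}=\mathfrak{m}_{W,0}$ by Definition~\ref{defi; non-degenerate}.
\end{itemize}
Proposition~\ref{prop; nondegenerate} then gives $\Eu^{\tilde{\nu}}_D(0)=\Eu_D(0)$. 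Substituting into \eqref{eq; microGSV}, the two Euler obstruction terms cancel, which leaves $\GSV(\tilde{\nu},D,0)=1+(-1)^n\mu$.

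The only point needing care is the one underlying Proposition~\ref{prop; nondegenerate}. The Gonz\'alez-Sprinberg formula \eqref{eq; EulerObs} involves the scheme $\tilde{Z}_0$ cut out by the lifted section $\chi$. One has to confirm that this scheme equals the scheme cut out by $\pi^*(\cI_{\tilde{\nu}}\cO_{D,0})$, so that equality of ideals forces equality of Segre classes. Since $\chi$ is the tautological lift of the restriction of $\tilde{\nu}$, its zero scheme in $\cT\subset \pi^*TW$ is cut out by the pulled-back components $\tilde{\nu}_i$, and this step is immediate. I expect no real obstacle beyond this bookkeeping.
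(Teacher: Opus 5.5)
Your proposal is correct and matches the paper: the general formula is imported from \cite[Corollary 5.4]{LZ25-2}, with the stray $x$ read as the point $0$. The particular case then follows by substituting $\Eu^{\tilde{\nu}}_D(0)=\Eu_D(0)$ from Proposition~\ref{prop; nondegenerate} with $X=D$. Your check that the zero scheme $\tilde{Z}_0$ of $\chi$ is cut out by $\pi^*\cI_{\tilde{\nu}}$ holds because $\cT\subset\pi^*TW$ is a locally split subbundle; this is exactly the step the paper leaves implicit in proving that proposition.
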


We also have the following algebraic formula from \cite[Theorem 1]{MR1642757}.
\begin{prop}
\label{prop; algebraicGSV}
Let $D$ and $\tilde{\nu}$ be as in Convention~\ref{conv}, we have
\begin{equation}
\GSV(\tilde{\nu}, D,0)=
\begin{cases}
\dim_\CC \frac{\cO_{W,0}}{(f, \tilde{\nu}_0,\tilde{\nu}_1, \cdots ,\tilde{\nu}_n  )} -\tau & \text{ if } n=2d+1 \\
\dim_\CC \frac{\cO_{W,0}}{( \tilde{\nu}_0,\tilde{\nu}_1, \cdots ,\tilde{\nu}_n  )} 
-\dim_\CC \frac{\cO_{W,0}}{\left(\frac{\tilde{\nu}(f)}{f}, \tilde{\nu}_0,\tilde{\nu}_1, \cdots ,\tilde{\nu}_n  \right)}
+\tau & \text{ if } n=2d   
\end{cases} \/.
\end{equation}
\end{prop}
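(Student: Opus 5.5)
The plan is to deduce the formula in two stages: first identify the $\GSV$ index with the homological index of Gomez-Mont, then compute that homological index algebraically from the Koszul-type complex of Kähler differentials on $D$. The second stage is where the ideals in the statement and the dependence on the parity of $n$ come from.

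\emph{Step 1: $\GSV$ equals the homological index.} Write $\nu=\tilde{\nu}|_D$ and consider the complex of Kähler differentials of $D$ at $0$ with contraction as differential,
\[
0\to \Omega^{n+1}_{D,0}\xrightarrow{\iota_\nu}\Omega^{n}_{D,0}\xrightarrow{\iota_\nu}\cdots\xrightarrow{\iota_\nu}\Omega^1_{D,0}\xrightarrow{\iota_\nu}\cO_{D,0}\to 0,
\]
where $\Omega^p_{D,0}=\Omega^p_{W,0}/(f\Omega^p_{W,0}+df\wedge\Omega^{p-1}_{W,0})$. Since $\Sing(\tilde\nu)=\{0\}$ and $D$ has an isolated singularity, every homology group is a finite-dimensional $\CC$-vector space. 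Set $\Ind_{hom}(\nu)=\sum_p(-1)^p\dim H_p$. To show $\GSV(\tilde\nu,D,0)=\Ind_{hom}(\nu)$, I would use a conservation-of-number argument.
\begin{itemize}
\item Both sides are additive and constant under small deformations of the pair $(f,\tilde\nu)$ within the logarithmic condition. For the homological index this is semicontinuity plus the fact that the Euler characteristic of a flat family of complexes is constant.
\item Deform to a situation where the deformed hypersurface is smooth near the singular points of the deformed field.
\item There both indices reduce to the Poincaré–Hopf index, that is, to $\dim_\CC$ of the local algebra of the field.
\end{itemize}

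\emph{Step 2: compute the homology.} Let $h:=\tilde\nu(f)/f$. On $W$ the Koszul complex $(\Omega^\bullet_{W,0},\iota_{\tilde\nu})$ is exact except in degree $0$, where its homology is $\cO_{W,0}/\cI_{\tilde\nu}$, because $\tilde\nu_0,\ldots,\tilde\nu_n$ form a regular sequence. Relate $\Omega^\bullet_D$ to it by the two short exact sequences given by multiplication by $f$ and by $df\wedge$, using $\iota_{\tilde\nu}(df\wedge\omega)=hf\,\omega-df\wedge\iota_{\tilde\nu}\omega$. From the resulting long exact sequences:
\begin{itemize}
\item the low-degree terms contribute $\dim\cO_{W,0}/(f,\cI_{\tilde\nu})$ and $\dim\cO_{W,0}/(h,\cI_{\tilde\nu})$, the latter entering through the map induced by multiplication by $h$;
\item the top degrees contribute the torsion of $\Omega^n_{D,0}$ and $\Omega^{n+1}_{D,0}$, which for an isolated hypersurface singularity is $\cO_{W,0}/((f)+\cJ_f)$ and has length $\tau$.
\end{itemize}
This torsion sits in degrees $n$ and $n+1$, so it enters the alternating sum with sign $(-1)^n$, which explains the case split in the formula. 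When $n$ is odd, the $h$-terms combine with $\dim\cO/\cI_{\tilde\nu}$ via the exact sequence $0\to\cO/(\cI_{\tilde\nu}:f)\xrightarrow{f}\cO/\cI_{\tilde\nu}\to\cO/(f,\cI_{\tilde\nu})\to 0$ and the relation $hf\in\cI_{\tilde\nu}$-type identities. This collapses to $\dim\cO/(f,\cI_{\tilde\nu})-\tau$. When $n$ is even, it gives $\dim\cO/\cI_{\tilde\nu}-\dim\cO/(h,\cI_{\tilde\nu})+\tau$.

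\emph{Main obstacle.} The hard step is the middle-degree exactness: showing that $H_p(\Omega^\bullet_D,\iota_\nu)$ vanishes, or is controlled by the torsion, for $1\le p\le n-1$. This requires depth estimates for $\Omega^p_{D,0}$, namely Greuel's result that $\Omega^p_{D,0}$ is torsion-free for $p<n$ with the appropriate depth. I would try this first by a spectral-sequence comparison of the double complex $(\Omega^\bullet_W,\,\iota_{\tilde\nu},\,df\wedge)$. As a sanity check, the result must agree with Example~\ref{counterexam}, where the even-$n$ formula gives $1+\tau$.
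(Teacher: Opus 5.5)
The paper does not prove this proposition itself; it quotes Gómez-Mont's theorem \cite[Theorem 1]{MR1642757}. Your outline follows that strategy ($\GSV$ equals a homological index, which is then evaluated), but two steps fail as written.

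\emph{The complex is wrong.} The homological index that equals $\GSV$ is the Euler characteristic of $0\to\Omega^n_{D,0}\to\cdots\to\cO_{D,0}\to 0$, with $n=\dim D$. Adding $\Omega^{n+1}_{D,0}\cong\cO_{W,0}/((f)+\cJ_f)$, which has length $\tau$, shifts the alternating sum by $(-1)^{n+1}\tau$. Take $f=x^2+y^2$ and $\tilde\nu=y\partial_x-x\partial_y$. Then $\iota_\nu(dx\wedge dy)=x\,dx+y\,dy=\tfrac12 df=0$ in $\Omega^1_D$. In your complex $H_2$, $H_1$ (spanned by $y\,dx-x\,dy$) and $H_0$ are each one-dimensional. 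So your index is $1$, whereas $\GSV=\dim_\CC\cO_{W,0}/(f,x,y)-\tau=0=1-\mu$. For the same reason your torsion bookkeeping is inconsistent: length-$\tau$ torsion placed in both degrees $n$ and $n+1$ cancels instead of producing $(-1)^n\tau$.

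\emph{The deformation in Step 1 is not available.} For fixed $D$, every logarithmic field vanishes at the isolated singular point. A deformation of the pair $(f,\tilde\nu)$ that smooths $D$ while keeping the field logarithmic does not exist in general. For the Euler field of a weighted homogeneous $f$ with $\mu\ge 2$, the deformed field would have a single simple zero, forcing $\GSV\in\{0,1\}$; but $\GSV=1+(-1)^n\mu$. Deforming $\tilde\nu$ alone only shows that $\GSV-\Ind_{\mathrm{hom}}$ depends on $(D,0)$ alone. You give no evaluation that fixes this constant.

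The more serious gap is that Step 2 does not produce the case split. You attribute it to the sign $(-1)^n$ on the torsion, but the non-$\tau$ parts of the two formulas are genuinely different. Let $A=\cO_{W,0}/\cI_{\tilde\nu}$, which is Artinian Gorenstein. Since $hf=\sum_i\tilde\nu_i f_{x_i}\in\cI_{\tilde\nu}$, we have $fA\subseteq\operatorname{Ann}_A(h)$, and $\dim_\CC\operatorname{Ann}_A(h)=\dim_\CC A/hA$. Hence
\[
\dim_\CC\frac{\cO_{W,0}}{\cI_{\tilde\nu}}-\dim_\CC\frac{\cO_{W,0}}{(h,\cI_{\tilde\nu})}\le\dim_\CC\frac{\cO_{W,0}}{(f,\cI_{\tilde\nu})}.
\]
The inequality is strict in Example~\ref{counterexam} ($1<2$) and in the node example above ($0<1$). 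Applying the odd-$n$ formula to the former, or the even-$n$ formula to the latter, gives the wrong value. In particular, your odd-$n$ collapse via $0\to\cO/(\cI_{\tilde\nu}:f)\to\cO/\cI_{\tilde\nu}\to\cO/(f,\cI_{\tilde\nu})\to 0$ would need $(\cI_{\tilde\nu}:f)=(h,\cI_{\tilde\nu})$, which is false in general.

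The middle homology also need not vanish. In Example~\ref{counterexam}, $\Omega^1_D$ is torsion-free, so $\iota_\nu$ kills the length-$\tau$ torsion of $\Omega^2_D$, giving $\dim H_2\ge 1$. Granting $\GSV=\Ind_{\mathrm{hom}}$ for the correct complex, $\GSV=2=\dim H_0$ forces $\dim H_1=\dim H_2\ge 1$.

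What actually has to be computed is $\dim H_n-\dim H_{n-1}$, together with an explanation of why the $h$-term appears only for even $n$. Your sketch contains no mechanism for this, so Step 2 proves neither case.
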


Another local index  constantly studied, especially for logarithmic holomorphic vector fields along $D$, is 
 the logarithmic index $\Ind_{\log}(\tilde{\nu}, D, 0)$ introduced by Aleksandrov in \cite{AA05}. This index is defined as the Euler characteristic of the complex of logarithmic differential forms induced by contraction with $\tilde{\nu}$. Again,  for simplicity we will omit the precise definition and only recall the following two  formulas from \cite[Proposition 4.10 and Equation (4.1)]{LZ25-2}.
\begin{prop}
\label{prop; micrologFormulaiso}
Let $D$ and $\tilde{\nu}$ be as in Convention~\ref{conv}, then we have 
\begin{align}
\label{eq; logandtau}
\Ind_{\log}(\tilde{\nu}, D, 0)
&= \Ind_{PH}(\tilde{\nu}, 0)
   - \GSV(\tilde{\nu}, D, 0)
   + (-1)^{n} \tau  \/, \\
\label{eq; microlog}
\Ind_{\log}(\tilde{\nu}, D, 0)
&= \sharp_0 \left(
[\textup{CC}(\ind_{W\setminus D})]\cdot [\tilde{\sigma}(W)]
\right)
+ (-1)^{n}\left(\tau - \mu \right) \/.
\end{align}
 \end{prop}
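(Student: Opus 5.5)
The plan is to prove \eqref{eq; logandtau} first by a homological comparison of Koszul-type complexes, and then to derive \eqref{eq; microlog} from it by a formal computation with characteristic cycles and the microlocal formulas \eqref{eq; microlocal for Euobs} and \eqref{eq; microGSV}.

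\emph{Step 1: formula \eqref{eq; logandtau}.} Let $(\Omega^\bullet_{W,0}(\log D),\iota_{\tilde{\nu}})$ denote the complex of logarithmic forms with differential given by contraction with $\tilde{\nu}$. Since $\Sing(\tilde{\nu})=\{0\}$, this complex, and every other complex below, has finite-dimensional cohomology.
\begin{itemize}
\item I compare it with the Koszul complex $(\Omega^\bullet_{W,0},\iota_{\tilde{\nu}})$. Because $\tilde{\nu}_0,\dots,\tilde{\nu}_n$ is a regular sequence, the Euler characteristic of the Koszul complex is $\dim_\CC \cO_{W,0}/\cI_{\tilde{\nu}}=\Ind_{PH}(\tilde{\nu},0)$.
\item The link between the two is the residue exact sequence
\[
0\to\Omega^p_{W,0}\to\Omega^p_{W,0}(\log D)\to \mathcal{R}^{p-1}\to 0 ,
\]
which is compatible with $\iota_{\tilde{\nu}}$ precisely because $\tilde{\nu}(f)\in(f)$. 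Here $\mathcal{R}^{\bullet}$ is the module of residues, a module of meromorphic forms on $D$ which differs from $\Omega^\bullet_{D,0}$ modulo torsion only at the singular point.
\item By additivity of Euler characteristics, $\Ind_{\log}$ equals $\Ind_{PH}$ plus $\pm$ the Euler characteristic of $(\mathcal{R}^\bullet,\iota_\nu)$.
\item I would then identify the latter Euler characteristic using Gomez-Mont's theorem. That theorem expresses the homological index of $\nu$ on $\Omega^\bullet_{D,0}$ through $\GSV(\tilde{\nu},D,0)$.
\item The passage from $\Omega^\bullet_{D,0}$ to $\mathcal{R}^\bullet$ changes the Euler characteristic by the length of the torsion and cotorsion. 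This contribution is concentrated in the top degree and equals $\dim_\CC\cO_{W,0}/((f)+\cJ_f)=\tau$, with sign $(-1)^n$.
\end{itemize}
Keeping track of the degree shifts and signs gives \eqref{eq; logandtau}. The main obstacle is this bookkeeping: matching the torsion of $\Omega^n_D$ and the cokernel of $\Omega^\bullet_D\to\mathcal{R}^\bullet$ exactly with $\tau$, and getting the overall sign right. I would check the signs on the example $f=x_0^2+\dots+x_n^2$ with the Euler field $E$, where every term can be computed by hand.

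\emph{Step 2: formula \eqref{eq; microlog}.} Write $\ind_{W\setminus D}=\ind_W-\ind_D$. Since $D$ has an isolated singularity,
\[
\ind_D=\Eu_D-(\Eu_D(0)-1)\ind_0 .
\]
Definition~\ref{defi; CC} then gives
\[
\textup{CC}(\ind_W)=(-1)^{n+1}[T^*_WW],\qquad
\textup{CC}(\ind_D)=(-1)^n[T^*_DW]-(\Eu_D(0)-1)[T^*_0W].
\]
I compute each intersection number with $[\tilde{\sigma}(W)]$ at $(0,0)$ as follows.
\begin{itemize}
\item By \eqref{eq; microlocal for Euobs} with $X=W$, the term from $T^*_WW$ contributes $\Eu^{\tilde{\nu}}_W(0)=\Ind_{PH}(\tilde{\nu},0)$.
\item By \eqref{eq; microlocal for Euobs} with $X=D$, the term $(-1)^n[T^*_DW]$ contributes $\Eu^{\tilde{\nu}}_D(0)$.
\item The section $\tilde{\sigma}$ meets the fiber $T^*_0W$ transversally in one point. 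The orientation, checked via \eqref{eq; microlocal for Euobs} for $X=\{0\}$, makes this intersection number $1$.
\end{itemize}
Together these give
\[
\sharp_0\bigl([\textup{CC}(\ind_{W\setminus D})]\cdot[\tilde{\sigma}(W)]\bigr)=\Ind_{PH}(\tilde{\nu},0)-\bigl(\Eu^{\tilde{\nu}}_D(0)-\Eu_D(0)+1\bigr).
\]
By Proposition~\ref{prop; microGSV}, the bracketed term equals $\GSV(\tilde{\nu},D,0)-(-1)^n\mu$. Substituting this into \eqref{eq; logandtau} yields \eqref{eq; microlog} immediately.
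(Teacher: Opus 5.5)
The paper gives no argument for this proposition; both identities are quoted from \cite{LZ25-2}. Your Step 2 is correct. It is the same characteristic-cycle computation the paper performs in \eqref{eq; microintersection}, carried out for arbitrary $n$ and $\tilde\nu$, and it shows that \eqref{eq; microlog} is equivalent to \eqref{eq; logandtau} once \eqref{eq; microGSV} is known. One caution: in \cite{LZ25-2} the GSV formula is Corollary 5.4, numbered after Proposition 4.10, and it is exactly what you get by combining the two identities. If it is derived that way there, Step 2 is a consistency check rather than an independent proof, and you would need a separate justification of \eqref{eq; microGSV}.

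The genuine gap is in Step 1. The torsion/cokernel count is the entire content of \eqref{eq; logandtau}, and the description you give of it is wrong. Let $K^\bullet=(\Omega^\bullet_{W,0}\otimes\cO_{D,0},\,df\wedge)$ be the Koszul complex of $f_{x_0},\dots,f_{x_n}$ on $\cO_{D,0}$. Two facts are needed: $\alpha/f$ with $\alpha\in\Omega^{p}_{W,0}$ is logarithmic iff $df\wedge\alpha\in f\,\Omega^{p+1}_{W,0}$, and a logarithmic form has zero residue iff it is holomorphic. From these, the kernel $T^p$ and cokernel $C^p$ of $\Omega^p_{D,0}\to\mathcal{R}^p$, $\xi\mapsto\operatorname{res}(\frac{df}{f}\wedge\xi)$, satisfy $T^p\cong H^p(K^\bullet)$ and $C^p\cong H^{p+1}(K^\bullet)$. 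Since $\cJ_f\cO_{D,0}$ is $\mathfrak m$-primary in the $n$-dimensional Cohen--Macaulay ring $\cO_{D,0}$, you get $H^q(K^\bullet)=0$ for $q<n$ and $H^{n+1}(K^\bullet)\cong\cO_{W,0}/((f)+\cJ_f)$. The Koszul Euler characteristic of $n+1$ elements in dimension $n$ vanishes, so also $\dim_\CC H^n(K^\bullet)=\tau$.

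So the cokernel is not concentrated in top degree: $C^{n-1}$ and $C^n$ both have length $\tau$, and they cancel in the Euler characteristic. If the torsion of $\Omega^n_{D,0}$ and a length-$\tau$ cokernel both sat in degree $n$, as your last bullet suggests, they would also cancel and no $\tau$ would appear at all. What survives is $T^n$ alone. It survives only because G\'omez-Mont's complex is $\Omega^n_{D,0}\to\cdots\to\cO_{D,0}$ and does not contain $\Omega^{n+1}_{D,0}=T^{n+1}$. For example, for $xy=0$ with the Euler field, that complex has Euler characteristic $0=\GSV$, whereas adding $\Omega^2_{D,0}$ gives $1$.

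With $\chi=\sum_p(-1)^p\dim H_p$ (homology at the degree-$p$ term), this gives $\chi(\mathcal{R}^\bullet,\iota_\nu)=\GSV(\tilde\nu,D,0)-(-1)^n\tau$. The residue sequence shifts $\mathcal{R}^\bullet$ by one, so $\Ind_{\log}(\tilde\nu,D,0)=\Ind_{PH}(\tilde\nu,0)-\chi(\mathcal{R}^\bullet,\iota_\nu)$, which is \eqref{eq; logandtau}.
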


The following   corollary is then immediate. 
\begin{prop}
\label{prop; micrologcriteria}
The  following statements are equivalent.
\begin{enumerate}
	\item $D$ is weighted homogeneous at $0$.
	\item There exists a  holomorphic vector field $\tilde{\nu}$ on $W$ with an isolated singularity at $0$ and logarithmic along $D$, such that 
\[
\Ind_{\log}(\tilde{\nu}, D, 0)=\sharp_0 \left([\textup{CC}(\ind_{W\setminus D})]\cdot [\tilde{\sigma}(W)]  \right) \/.
\]
\item For every holomorphic vector field $\tilde{\nu}$ on $W$ with an isolated singularity at $0$ and logarithmic along $D$, we have
\[
\Ind_{\log}(\tilde{\nu}, D, 0)=\sharp_0 \left([\textup{CC}(\ind_{W\setminus D})]\cdot [\tilde{\sigma}(W)]  \right) \/.
\]
\end{enumerate}
\end{prop}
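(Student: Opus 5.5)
Comparing the two formulas in Proposition~\ref{prop; micrologFormulaiso} gives the whole statement. For any $\tilde{\nu}$ as in Convention~\ref{conv}, equation \eqref{eq; microlog} states
\[
\Ind_{\log}(\tilde{\nu}, D, 0)-\sharp_0 \left([\textup{CC}(\ind_{W\setminus D})]\cdot [\tilde{\sigma}(W)]\right)=(-1)^{n}(\tau-\mu),
\]
and the right-hand side does not depend on $\tilde{\nu}$. Therefore the equality in (2) or (3) holds for a given $\tilde{\nu}$ if and only if $\tau=\mu$. By Saito's theorem \cite{KSaito71}, recalled in the introduction, this holds if and only if $D$ is weighted homogeneous at $0$, since $D$ has an isolated singularity.

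The implications then go as follows. For (1)$\Rightarrow$(3), weighted homogeneity gives $\tau=\mu$, so the correction term vanishes for every admissible $\tilde{\nu}$. The implication (3)$\Rightarrow$(2) is formal once the set of admissible vector fields is nonempty. The weighted Euler vector field $\sum w_i x_i\partial_{x_i}$ works, but only in weighted homogeneous coordinates, which (3) does not give us. So I would instead use a logarithmic vector field with an isolated zero that always exists for an isolated hypersurface singularity. One option is an $f$-twisted Hamiltonian-type field, as in Example~\ref{counterexam}. Another is to cite the standard existence of such fields for isolated singularities, for instance $f_{x_0}\partial_{x_0}+\cdots$ corrected appropriately. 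For (2)$\Rightarrow$(1), the equality for one $\tilde{\nu}$ forces $(-1)^n(\tau-\mu)=0$, so $\tau=\mu$, and Saito's theorem gives weighted homogeneity.

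The only point needing care is the nonemptiness in (3)$\Rightarrow$(2). If (3) is read as quantifying over a possibly empty set, it would hold vacuously. I expect this to be the main obstacle, though a minor one. I would handle it by exhibiting an explicit logarithmic field with an isolated zero, a Hamiltonian-type construction adapted from Example~\ref{counterexam} in general dimension, or by citing its existence. Alternatively, I would note that the paper's intended reading takes the class to be nonempty.
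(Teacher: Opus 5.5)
Your proposal is correct and follows the paper's argument exactly. The paper calls the statement immediate from \eqref{eq; microlog}, since the discrepancy $(-1)^n(\tau-\mu)$ does not depend on $\tilde{\nu}$ and vanishes precisely when $\tau=\mu$, i.e.\ (by Saito) when $D$ is weighted homogeneous. The paper does not comment on the nonemptiness needed for (3)$\Rightarrow$(2), and your Hamiltonian-type field settles it: pair up the coordinates when $n$ is odd, and when $n$ is even add $f\partial_{x_n}$ after choosing coordinates generic enough that $V(f,f_{x_0},\dots,f_{x_{n-1}})=\{0\}$.
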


\section{Criteria of Weighted Homogeneity}
In this section we prove Conjecture~\ref{c1}. Recall that 
 $(W, 0)\subset \CC^{n+1}$ is the germ of an open domain and $D\subset W$ is the germ of a reduced hypersurface cut by $f\in \cO_{W,0}$. 
As in Convention~\ref{conv} 
 we assume that $D$ has   an isolated singularity at $0$ 
with Milnor number   $\mu$ and  Tjurina number  $\tau$. 
 Then $D$ is  weighted homogeneous at $0$ if and only if $\mu = \tau$.

\begin{theo}
\label{theo; non-degenerate}
The hypersurface germ $D$ has a weighted homogeneous singularity at $0$ if and only if there exists a logarithmic holomorphic vector field $\tilde{\nu}\in \Der_W(-\log D)$ such that $\tilde{\nu}$ has a   non-degenerate  isolated singularity at $0$.
\end{theo}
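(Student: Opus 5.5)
The plan is to compare the two formulas for the $\GSV$ index that are already available: the microlocal one, Proposition~\ref{prop; microGSV}, and the algebraic one, Proposition~\ref{prop; algebraicGSV}. For a non-degenerate field the Jacobian ideal is the maximal ideal, so the algebraic formula reduces to an expression in $\tau$ alone. The microlocal formula, combined with Proposition~\ref{prop; nondegenerate}, reduces to an expression in $\mu$ alone. Equating them should force $\mu=\tau$, which by Saito is weighted homogeneity.

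\emph{Easy direction.} Suppose $D$ is weighted homogeneous. By \cite{KSaito71} we may choose local coordinates in which $f$ is weighted homogeneous with positive rational weights $w_0,\dots,w_n$ and degree $d$. Take the weighted Euler field $\tilde{\nu}=\sum_i w_i x_i\partial_{x_i}$. Then $\tilde{\nu}(f)=d\cdot f$, so $\tilde{\nu}\in\Der_W(-\log D)$. Since every $w_i\neq 0$, we have $\cI_{\tilde{\nu}}=\mathfrak{m}_{W,0}$, so $\tilde{\nu}$ has a non-degenerate isolated singularity at $0$.

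\emph{Main direction.} Let $\tilde{\nu}$ be logarithmic with $\cI_{\tilde{\nu}}=\mathfrak{m}_{W,0}$. Then $\tilde{\nu}$ satisfies Convention~\ref{conv}, so by Proposition~\ref{prop; microGSV} together with Proposition~\ref{prop; nondegenerate} we get $\GSV(\tilde{\nu},D,0)=1+(-1)^n\mu$. We now evaluate the algebraic formula of Proposition~\ref{prop; algebraicGSV} with $(\tilde{\nu}_0,\dots,\tilde{\nu}_n)=\mathfrak{m}_{W,0}$, splitting by the parity of $n$.
\begin{itemize}
\item If $n$ is odd, then $(f,\tilde{\nu}_0,\dots,\tilde{\nu}_n)=\mathfrak{m}_{W,0}$ because $f(0)=0$. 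The formula gives $\GSV=1-\tau$. Comparing with $1-\mu$ yields $\tau=\mu$.
\item If $n$ is even, set $h:=\tilde{\nu}(f)/f\in\cO_{W,0}$ and $\delta:=\dim_\CC \cO_{W,0}/(h,\mathfrak{m}_{W,0})$. Then $\delta=1$ if $h(0)=0$ and $\delta=0$ otherwise. The formula gives $\GSV=1-\delta+\tau$. Comparing with $1+\mu$ yields $\tau=\mu+\delta$.
\end{itemize}
In the even case we use the inclusion $(f)+\cJ_f\supseteq\cJ_f$, which gives $\tau\le\mu$. Hence $\delta=0$ and $\tau=\mu$. (As a consistency check, this says $h(0)\neq 0$, matching the weighted Euler field, for which $h=d$.)

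In both cases $\tau=\mu$, so $D$ is weighted homogeneous by Saito's criterion recalled in the introduction. The only delicate step is the even-dimensional case: there the correction term $\delta$ appears with a sign that has to be controlled, and the inequality $\tau\le\mu$ is what settles it. Everything else is a direct substitution into results already stated in the paper.
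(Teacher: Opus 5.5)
Your proof is correct and follows the paper's argument: you compare the algebraic formula of Proposition~\ref{prop; algebraicGSV} with the microlocal formula of Proposition~\ref{prop; microGSV}, use $\Eu^{\tilde{\nu}}_D(0)=\Eu_D(0)$ for non-degenerate fields, and close the even case with $\tau\le\mu$, exactly as the paper does. The only difference is in the odd case, where the paper re-derives the same identity through the logarithmic-index formulas and the characteristic-cycle decomposition of $\ind_{W\setminus D}$; you cite Proposition~\ref{prop; microGSV} directly, which is legitimate since that proposition is stated for all $n$.
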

\begin{proof}
When $(D,0)$ is a weighted homogeneous singularity we may take the weighted Euler vector field 
 $\tilde{\nu}=\sum_{i=0}^n a_ix_i\partial_{x_i}$. Clearly this vector field is non-degenerate. 
 
Now we prove the other direction, where we assume  
that $\tilde{\nu} =\sum_{i=0}^{n} \tilde{\nu}_i \partial_{x_i}\in \Der_W(-\log D)$ is a logarithmic holomorphic vector field with a non-degenerate isolated  singularity at $0$. Then we have  
\[
\Ind_{PH}(\tilde{\nu}, 0)=\dim_\CC \frac{\cO_{W,0}}{(\tilde{\nu}_0, \cdots, \tilde{\nu}_{n})}=\dim_\CC \frac{\cO_{W,0}}{\mathfrak{m}_{W,0}}=1 \/.
\]

First we assume that $n=2d+1$ is odd.  By \cite[Theorem 1]{MR1642757}, as $f(0)=0$ we have
\[
\GSV(\tilde{\nu}, D, 0)=\dim_\CC \frac{\cO_{W,0}}{(f, \tilde{\nu}_0, \cdots, \tilde{\nu}_{2d+1})} -  \tau =1-\tau \/.
\]

Combining Equation \eqref{eq; logandtau} and \eqref{eq; microlog} we have 
\begin{align*}
0=\GSV(\tilde{\nu}, D, 0)-(1-\tau)
=& \Ind_{PH}(\tilde{\nu}, 0)-1-\Ind_{\log}(\tilde{\nu}, D, 0) = -\Ind_{\log}(\tilde{\nu}, D, 0) \\
=&  -\sharp_0 \left([\textup{CC}(\ind_{W\setminus D})]\cdot [\tilde{\sigma}(W)]  \right)+ \left(\tau -\mu \right) \/.
\end{align*}
Since $D$ has only isolated singularity at $0$,   
\begin{align*}
\ind_{W\setminus D}
=&\ind_W-\ind_D  = \ind_W-\Eu_D+(\Eu_D(0) -1)\ind_{\{0\}} \\
=&(-1)^{n+1}\Eu^\vee_W
-(-1)^n\Eu^\vee_D +(\Eu_D(0)-1)\Eu^\vee_{\{0\}} \\
=& \Eu^\vee_W
+\Eu^\vee_D +(\Eu_D(0)-1)\Eu^\vee_{\{0\}} \/.
\end{align*}
As $\tilde{\nu}$ is non-degenerate, we see that $(-1)^{n+1}\sharp_0 \left([T_W^*W]\cdot [\tilde{\sigma}(W)]\right) =\Ind_{PH}(\tilde{\nu},0)=1$. Then
\begin{align}
\sharp_0 \left([\textup{CC}(\ind_{W\setminus D})]\cdot [\tilde{\sigma}(W)]  \right)
=& (-1)^{n+1} \sharp_0  \left([T_W^*W]\cdot [\tilde{\sigma}(W)]  \right)
+(-1)^{n+1} \sharp_0  \left([T_D^*W]\cdot [\tilde{\sigma}(W)]  \right) \nonumber \\
& +(\Eu_D(0)-1)\cdot  \sharp_0  \left([T_{\{0\}}^*W]\cdot [\tilde{\sigma}(W)]  \right) \nonumber \\
=& 1+(-1)^{n+1} \sharp_0  \left([T_D^*W]\cdot [\tilde{\sigma}(W)]  \right) +(\Eu_D(0)-1) \nonumber \\
=&  \sharp_0  \left([T_D^*W]\cdot [\tilde{\sigma}(W)]  \right) + \Eu_D(0) 
\label{eq; microintersection} \/.
\end{align} 
Therefore we have
\begin{align*}
0=\GSV(\tilde{\nu}, D, 0)-(1-\tau)
=&   
-\sharp_0 \left([\textup{CC}(\ind_{W\setminus D})]\cdot [\tilde{\sigma}(W)]  \right)+\left(\tau-\mu \right) \\
=&  (-1)^n\sharp_0  \left([T_D^*W]\cdot [\tilde{\sigma}(W)]  \right)-\Eu_D(0) +\left(\tau-\mu \right)  \\
=& \Eu^{\tilde{\nu}}_D(0) -\Eu_D(0) +\left(\tau-\mu \right)   \/.
\end{align*}
The last equality is due to Equation~\eqref{eq; microlocal for Euobs}. 
Since $\tilde{\nu}$ is non-degenerate,  we have $\Eu^{\tilde{\nu}}_D(0) =\Eu_D(0)$ by Proposition~\ref{prop; nondegenerate}. Therefore
$\mu=\tau$ and   $D$ is weighted homogeneous.

The case of  $n=2d$ has been proved in \cite[Theorem 2]{Machado-Seade26} based on the lower bound of \cite[Theorem 1]{Machado-Seade26}.  Here we provide an alternating proof without adapting the lower bound. Since $n=2d$ is even, by \cite[Theorem 1]{MR1642757}, setting $\tilde{\nu}(f)=hf$ we have 
\[
\GSV(\tilde{\nu}, D, 0)=\tau
+
\dim_\CC \frac{\cO_{W,0}}{( \tilde{\nu}_0, \cdots, \tilde{\nu}_{2d})} -\dim_{\CC}\frac{\cO_{W,0}}{(h, \tilde{\nu}_0, \cdots, \tilde{\nu}_{2d})} =1+ \tau- \dim_{\CC}\frac{\cO_{W,0}}{(h, \tilde{\nu}_0, \cdots, \tilde{\nu}_{2d})}\/.
\]
Since   $(\tilde{\nu}_0, \cdots, \tilde{\nu}_{2d})=\mathfrak{m}_{W,0}$, we have
$\dim_{\CC}\frac{\cO_{W,0}}{(h, \tilde{\nu}_0, \cdots, \tilde{\nu}_{2d})}=1- \dim_\CC  h(0) \CC$ is either $0$ or $1$, depending on whether $h$ is a unit in $\cO_{W,0}$. 
By   Proposition~\ref{prop; microGSV}   we have
\[
\GSV(\tilde{\nu}, D, 0)=1+\mu+ \Eu^{\tilde{\nu}}_D(0)   - \Eu_D(0)=1+\mu   \/.
\]
Combining previous two equations we have 
\[
1+\mu=1+\tau- \dim_{\CC}\frac{\cO_{W,0}}{(h, \tilde{\nu}_0, \cdots, \tilde{\nu}_{2d})}\leq 1+\tau \/.
\]
Since $\mu\geq \tau$ we then obtain 
$\delta=0$ and $\tau=\mu$, and $D$ is weighted homogeneous at $0$.
\end{proof}

Tracking the computation in \eqref{eq; microintersection} we see that
\begin{coro}
\label{coro; logindnondegerate}
Let $D$ and $\tilde{\nu}$ be as in Convention~\ref{conv}. 
If $\tilde{\nu}$ has a    non-degenerate 	isolated singularity at $0$, then  we have
\[
\Ind_{\log}(\tilde{\nu}, D, 0)=0 \/.
\]
\end{coro}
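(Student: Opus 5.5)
We read $\Ind_{\log}$ off Equation~\eqref{eq; microlog}, plugging in the value of the microlocal intersection already obtained in \eqref{eq; microintersection}. None of that computation used the parity of $n$ or weighted homogeneity; it used only the decomposition of $\ind_{W\setminus D}$ into $\Eu^\vee_W$, $\Eu^\vee_D$ and $\Eu^\vee_{\{0\}}$, together with $\Ind_{PH}(\tilde{\nu},0)=1$ for a non-degenerate field. The one term we must recheck for general $n$ is the sign of the $T_D^*W$ coefficient. Writing $\ind_W=(-1)^{n+1}\Eu^\vee_W$ and $\ind_D$ in terms of $(-1)^n\Eu^\vee_D$, we get
\[
\sharp_0\left([\textup{CC}(\ind_{W\setminus D})]\cdot[\tilde{\sigma}(W)]\right)=1+(-1)^{n+1}\sharp_0\left([T_D^*W]\cdot[\tilde{\sigma}(W)]\right)+\Eu_D(0)-1 .
\]
Here we use $\sharp_0([T^*_{\{0\}}W]\cdot[\tilde{\sigma}(W)])=1$: the fibre $T_0^*W$ meets the section transversally at one point.

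Next we identify the middle term through Equation~\eqref{eq; microlocal for Euobs} with $d=n$:
\[
(-1)^{n+1}\sharp_0\left([T_D^*W]\cdot[\tilde{\sigma}(W)]\right)=-\Eu^{\tilde{\nu}}_D(0).
\]
By Proposition~\ref{prop; nondegenerate} this equals $-\Eu_D(0)$, so the microlocal intersection number is exactly $0$. Substituting into \eqref{eq; microlog} gives $\Ind_{\log}(\tilde{\nu},D,0)=(-1)^n(\tau-\mu)$.

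Finally, Theorem~\ref{theo; non-degenerate} applies because $\tilde{\nu}$ is a non-degenerate logarithmic field. It gives that $D$ is weighted homogeneous, so $\mu=\tau$ and $\Ind_{\log}(\tilde{\nu},D,0)=0$.

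As a cross-check, Equation~\eqref{eq; logandtau} gives $\Ind_{\log}=1-\GSV+(-1)^n\tau$. By Proposition~\ref{prop; microGSV}, $\GSV=1+(-1)^n\mu$, which yields the same value $(-1)^n(\tau-\mu)=0$.

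The main obstacle is sign bookkeeping. The signs in the displayed chain \eqref{eq; microintersection} look tailored to odd $n$: there $(-1)^{n+1}=1$, which is where the text writes $\Eu^\vee_W+\Eu^\vee_D$. We therefore redo that line with general signs, checking that the Euler obstruction term cancels against $\Eu_D(0)-1$ plus the contribution of $T_W^*W$ for every $n$. Failing that, we fall back on the cross-check above, which is parity-free.
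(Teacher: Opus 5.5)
Your proposal is correct and follows the paper's route: the paper obtains the corollary by tracking \eqref{eq; microintersection} to see that the microlocal intersection number equals $\Eu_D(0)-\Eu^{\tilde{\nu}}_D(0)=0$. It then substitutes this into \eqref{eq; microlog}, using $\tau=\mu$ from Theorem~\ref{theo; non-degenerate}. Your general-$n$ sign bookkeeping, with coefficient $(-1)^{n+1}$ on $[T_D^*W]$, is accurate: it corrects the paper's display, which is written with the odd-$n$ specialization, and your parity-free cross-check via \eqref{eq; logandtau} is also valid.
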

 When $D$ is a free divisor this is known by \cite[Proposition 2]{AA05}. Thus this corollary gives a parallel analogue for isolated hypersurface singularities.

The following corollary is a consequence of Theorem~\ref{theo; non-degenerate} and \cite[Theorem A]{Morvan24}.
\begin{coro}
\label{coro; conjsolved} 
Conjecture~\ref{c1} is true.
\end{coro}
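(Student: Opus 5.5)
The three implications from weighted homogeneity are handled by the weighted Euler vector field $\sum_i a_i x_i\partial_{x_i}$ with positive weights $a_i>0$. It is logarithmic, since $E_a(f)=f$ after normalizing the weights. It has $\Sing=\{0\}$, and its linear part is the diagonal matrix $\mathrm{diag}(a_i)$, which is invertible; this gives $(i)\implies(iii)$. For transversality, $dr(\text{Re}\,E_a)=2\sum_i a_i|x_i|^2>0$ away from $0$, which gives $(i)\implies(ii)$. The implication $(iii)\implies(i)$ is exactly Theorem~\ref{theo; non-degenerate}. So what remains is $(ii)\implies(i)$, for which I see two routes.

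\textbf{Route A (preferred): reduce to $(iii)$.} I would show that a logarithmic field $\tilde{\nu}$ with isolated singularity, transverse to the link spheres of $D$, has invertible linear part $A=d\tilde{\nu}(0)$. Suppose instead that $A$ is singular, and first treat the case where $\ker A$ meets the tangent cone $C_0D$ nontrivially.

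Pick a curve germ $\gamma(t)\subset D$ tangent to a direction $v\in\ker A\cap C_0D$. Write $\gamma(t)=t^k v+\dots$ and expand
\[
\text{Re}\langle \tilde{\nu}(\gamma(t)),\gamma(t)\rangle .
\]
The linear term $A\gamma$ has order $> k$ in this direction, and the sign of the leading term should be forced to oscillate with $\arg t$, since the leading part is a holomorphic monomial in $t$ of positive degree. That oscillation contradicts transversality: on each connected component of the link of the curve branch, which is a circle, $dr(\text{Re}\,\tilde{\nu})$ has constant sign, yet the real part of a nonconstant holomorphic function of $t$ restricted to a small circle changes sign.

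More cleanly, along a branch parametrised by $t$, the quantity $dr(\text{Re}\,\tilde{\nu})=2\,\text{Re}\langle\tilde{\nu},\gamma\rangle$ can be compared with $\text{Re}\big(\bar t\,\gamma'(t)^{*}\dots\big)$. Because $\tilde{\nu}$ is tangent to the curve, $\tilde{\nu}(\gamma(t))=g(t)\gamma'(t)$ for a holomorphic $g$, and transversality forces $g(t)=c\,t+\dots$ with $\text{Re}(c)\neq 0$. This is the key computation: it gives linear growth along every branch.

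\textbf{Main obstacle.} Two points are hard. First, pushing this branchwise statement to invertibility of $A$ on all of $\CC^{n+1}$, and not just on the tangent cone. Second, dealing with the case where $\ker A\cap C_0D=0$. If I cannot finish this, I would switch to the alternative below.

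\textbf{Route B (alternative).} Integrate $\tilde{\nu}$ directly. Transversality makes the real flow of $-\text{Re}\,\tilde{\nu}$, or of $+\text{Re}\,\tilde{\nu}$ depending on the sign, strictly decrease $r$ on $D\setminus\{0\}$. Its time-$T$ map is then a holomorphic map, being the flow of a holomorphic field at real time, and it preserves $D$ because $\tilde{\nu}$ is logarithmic. It sends $D\cap B_\epsilon$ into $D\cap B_{\epsilon'}$ with $\epsilon'<\epsilon$. This is a contracting automorphism of the germ $(D,0)$, and \cite[Theorem A]{Morvan24} then gives weighted homogeneity.

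The delicate point in Route B is that transversality is assumed only on $D$, not on $W$. So I must check that the flow exists on $D$ near $0$ for all positive time, which follows since $D\cap \overline{B_\epsilon}$ is forward invariant. I must also check that the germ of automorphism of $D$ is contracting in Morvan's sense, namely that its iterates converge uniformly to $0$. This uses compactness of the links together with the strict decrease of $r$, giving a uniform contraction rate on $D\cap(\overline{B_\epsilon}\setminus B_{\epsilon/2})$.
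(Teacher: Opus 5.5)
Your Route B is the paper's proof of $(ii)\Rightarrow(i)$: integrate $\tilde\nu$, obtain a contracting automorphism of $(D,0)$, and apply \cite[Theorem A]{Morvan24}. You also handle the other implications as the paper does. On one point you are more careful than the paper. Definition~\ref{defi; transverse} only controls $dr(\text{Re}\,\tilde\nu)$ on $D$, while the paper's proof asserts it on all of $B_{\epsilon_0}(0)\setminus\{0\}$. Your use of forward invariance of $D\cap\overline{B_\epsilon}$, together with a compactness bound on $D\cap(\overline{B_\epsilon}\setminus B_\delta)$, is exactly what uniform contraction needs.

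There is, however, a step that you and the paper both take for granted. The phrase ``depending on the sign'' presupposes that $dr(\text{Re}\,\tilde\nu)$ has a single sign on all of $D\cap B_{\epsilon_0}(0)\setminus\{0\}$.

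For $n\geq 2$ this holds because the link $D\cap S_\epsilon$ is connected (Milnor: it is $(n-2)$-connected), and you should say so.

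For $n=1$ it fails as soon as $D$ has several branches. Take $f=xy$ and $\tilde\nu=x\partial_x-y\partial_y$. Then $\tilde\nu(f)=0$, $\Sing(\tilde\nu)=\{0\}$, and $dr(\text{Re}\,\tilde\nu)=2(|x|^2-|y|^2)$. This never vanishes on $D\cap S_\epsilon$, so the field is transverse, but it is positive on one branch and negative on the other. The flow $(x,y)\mapsto(e^Tx,e^{-T}y)$ is contracting for neither sign of $T$, so Route B yields nothing.

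For $n=1$ you therefore need an argument that does not care about signs, for example the odd-$n$ case of \cite{Machado-Seade26}. Alternatively, argue directly:
\begin{itemize}
\item A field transverse to the link, pointing inward on some link components and outward on others, has $\GSV$ index $\chi(F)-\chi(\partial_{\mathrm{in}}F)$.
\item Since $\partial F$ is a closed odd-dimensional manifold, $\chi(\partial_{\mathrm{in}}F)=0$, so the index is $\chi(F)=1+(-1)^n\mu$.
\item For $n$ odd, Proposition~\ref{prop; algebraicGSV} turns this into $\tau-\mu=\dim_\CC\cO_{W,0}/(f,\tilde\nu_0,\dots,\tilde\nu_n)-1\geq 0$, hence $\tau=\mu$.
\end{itemize}

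Two smaller remarks.

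First, your computation $dr(\text{Re}\,E_a)=2\sum_i a_i|x_i|^2$ for $(i)\Rightarrow(ii)$, like the paper's ``obvious'', is done in the weighted-homogeneous coordinates. Definition~\ref{defi; transverse}, however, uses the fixed metric of $W$. After a non-unitary linear change, the linear part $L$ of $E_a$ is diagonalizable with positive eigenvalues but not normal, and $\text{Re}\langle Lx,x\rangle$ can change sign on the tangent cone of $D$. An example is $y_1^6+y_2^4+y_3^3$: its tangent cone $\{y_3=0\}$ contains eigenvectors for $\tfrac16$ and $\tfrac14$, and the sign change appears once these are made nearly parallel. So (ii) has to be read up to a change of coordinates.

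Second, drop Route A. For $n\geq 2$ an arbitrary curve $\gamma\subset D$ is not $\tilde\nu$-invariant, so $\tilde\nu(\gamma(t))=g(t)\gamma'(t)$ is unjustified. The paper obtains non-degeneracy of transverse fields only a posteriori, from the contracting automorphism itself (Remark~\ref{rema}).
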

\begin{proof}
It remains to prove that, if   $\tilde{\nu}\in \Der_{W,0}(-\log D)$ has an isolated singularity at $0$ and 
is transverse to  the link spheres of $D$, then $D$ is weighted homogeneous. By Definition~\ref{defi; transverse}, there exists $\epsilon_0>0$ such that   
$dr(\text{Re}(\tilde{\nu}))(P)    \neq 0$ for any $P\in B_{\epsilon_0}(0)\setminus \{0\}$. Replacing $\tilde{\nu}$ by $-\tilde{\nu}$ if necessary  we may assume that $dr(\text{Re}(\tilde{\nu}))(P)  < 0$ on $B_{\epsilon_0}(0)\setminus \{0\}$, i.e., the vector field $\tilde{\nu}$   points inwards everywhere on the link spheres. In particular $\Sing(\text{Re}(\tilde{\nu}))=\{0\}$.

Let $\Phi_t$ be the local holomorphic flow of $\tilde{\nu}$  defined in $B_{\epsilon_1}(0)$ for some $\epsilon_0>\epsilon_1 >0$.  
Since $\tilde{\nu}$   is tangent to $D$ and points inwards, the flow $\Phi_t$ takes 
$D\cap  B_\epsilon(0)  \setminus \{0\}$ to $D\cap B_\epsilon(0)\setminus \{0\}$ for every $\epsilon <\epsilon_1$. 
Since $\Sing(\text{Re}(\tilde{\nu}))=\{0\}$, the flow $\Phi_t$ then produces a contracting automorphism $F$ on the germ $(D,0)$ in the sense of \cite[Definition 2.1]{Morvan24}. 
By \cite[Theorem A]{Morvan24} the hypersurface $D$ has be to weighted homogeneous at $0$.
\end{proof}

\begin{rema}
\label{rema}
In fact in the proof of  \cite[Theorem 3.3]{Morvan24} the author states that all the eigenvalues of the Jacobian matrix $DF(0)$ have length $<1$. Therefore the eigenvalues of the linear part of $\tilde{\nu}$ have negative real parts and  $\tilde{\nu}$  is non-degenerate.  
 Thus on an isolated hypersurface germ, 
if a holomorphic vector field  tangent to $D$ (with an isolated singularity at $0$) is transverse to  the link spheres of $D$, it must have a non-degenerate isolated singularity at $0$. For more details we refer to the self-contained proof generated by the Danus system in \cite{LiuZ26}. 
\end{rema}

\vskip .2in

\bibliographystyle{plain}
\bibliography{ref}

\end{document}